\newtheorem{prop}{Proposition}
\newtheorem{corr}{Corollary}
\def\PP{\mathbf{P}}
\def\EE{\mathbf{E}}
\def\Var{\mathbf{Var}}
\def\Cov{\mathbf{Cov}}
\def\sign{\mathrm{sign}}
\def\toP{\overset{\PP}{\longrightarrow}}
\def\toD{\overset{d}{\longrightarrow}}
\newcommand{\blind}{0}
\begin{document}

\def\spacingset#1{\renewcommand{\baselinestretch}%
{#1}\small\normalsize} \spacingset{1}

%%%%%%%%%%%%%%%%%%%%%%%%%%%%%%%%%%%%%%%%%%%%%%%%%%%%%%%%%%%%%%%%%%%%%%%%%%%%%%

\if0\blind
{
  \title{\bf High Dimensional Space Oddity}
\author[1]{Haim Bar}
\author[1]{Vladimir Pozdnyakov}
\affil[1]{Department of Statistics, University of Connecticut}
  \maketitle
} \fi

\if1\blind
{
  \bigskip
  \bigskip
  \bigskip
  \begin{center}
    {\LARGE\bf High Dimensional Space Oddity}
\end{center}
  \medskip
} \fi

\bigskip
\begin{abstract}
  In his 1996 paper, Talagrand highlighted that the Law of Large Numbers (LLN)
  for independent random variables can be viewed as a geometric property of
  multidimensional product spaces. This phenomenon is known as the concentration
  of measure. To illustrate this profound connection between geometry and
  probability theory, we consider a seemingly intractable geometric problem in
  multidimensional Euclidean space and solve it using standard probabilistic
  tools such as the LLN and the Central Limit Theorem (CLT).
\end{abstract}

\noindent%
{\it Keywords:}  Central Limit Theorem, Law of Large Numbers, Multidimensional Euclidean Space,
Measure Concentration.
\vfill

\newpage
\spacingset{1.45} % DON'T change the spacing!
\section{Introduction: A Curious Result}

\begin{epigraphs}
\qitem{{\it It is through science that we prove, but through intuition that we discover.}}%
      {Henri Poincaré}
\qitem{{\it The only real valuable thing is intuition.}}%
      {Albert Einstein}
\end{epigraphs}

Maybe. Or maybe, intuition is what made Poincaré reject Cantor's set theory, saying that
``There is no actual infinity'', and caused Einstein to oppose the notion of entanglement in quantum
physics, suggesting that it implies ``spooky action at a distance''.
%https://plato.stanford.edu/entries/poincare/
%https://www.astronomy.com/science/what-is-quantum-entanglement-a-physicist-explains-einsteins-spooky-action-at-a-distance/

In probability and statistics, there are many counterexamples \citep{WiseHall1993} which show that one
must always be careful trusting one's intuition. Famously, even the great Paul Erdős could not
believe the correct solution to the Monty Hall problem \citep{gardner1982aha}.
But maybe all these examples of failed intuition only occur in relatively recent theories? Maybe our geometric
intuition is sound? This is definitely not the case in dimension greater than 3.
The following example was given in \cite{steele2004} as a cautionary tale, and is the starting
point for this paper.

\cite{steele2004} describes an arrangement of $n$-dimensional balls centered around the
$2^n$ vertices of a cube, $\mathbf{c} \in \{-1, 1\}^n$, each with radius 1,
as in Figure \ref{Mike} for $n=2$, and in the left panel of Figure \ref{3dballs} for $n=3$.
The arrangement is bounded by the cube $[-2, 2]^n$.
The question is, if we enclose a hypersphere centered at the origin so that it is tangent to the hyperspheres
in the arrangement, is it in the cube $[-2, 2]^n$ for all $n \ge 2$?

\begin{figure}[htbp]
\begin{center}
\includegraphics[scale=1]{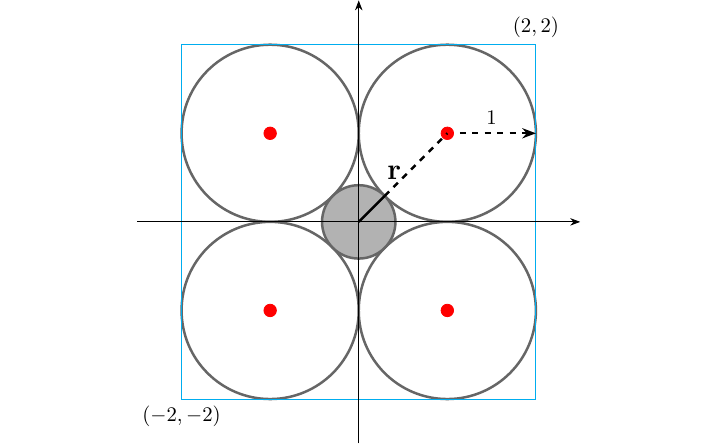}
\caption{An arrangement of spheres as in \cite{steele2004}. The distance from the origin to the center of each ball is $r=\sqrt{n}$.}
\label{Mike}
\end{center}
\end{figure}

The answer is no. To see that, all we need is the multidimensional version of the Pythagorean theorem.
The radius of the hyperspheres is 1, regardless of the dimension, $n$. The length of the line between the center
of a hypersphere and the origin is $\sqrt{n}$, so the radius of the inner hypersphere is $\sqrt{n}-1$. But, for $n>9$,
the inner hypersphere must reach outside the $[-2, 2]^n$ cube! Furthermore, as $n\to\infty$, most of the volume
of the inner hypersphere is outside the cube.

The lesson from this example is that intuition is important for innovation, but we must not
forget the first half of Poincaré's maxim  -- we must do the math to prove or disprove
conjectures before they can be considered discoveries.

Note that from here forward we dispense with the ``hyper'' prefix and just use the term sphere
(or $n$-dimensional sphere) and the notation $S^n(r)$ to describe the surface of a ball of radius $r$ in $\mathbb{R}^n$.
We note that, topologically, the sphere has a dimension of $n-1$, but in order to avoid confusion
we refer to the dimension of Euclidean space in which the sphere is embedded.

\section{A Related Question}

Assume that we put a source of light at the origin. What fraction of light will be blocked by the
unit balls located at vertices of the $[-1, 1]^n$ cube?
It is obvious that nothing will get out for $n=1$ and $n=2$ (see Figure \ref{Mike} for $n=2$). However,
when $n=3$, the collection of $2^3=8$ balls will allow some light out. To see this, just take a look
at the structure along any axis (as can be seen in the left panel of Figure \ref{3dballs}.)

Now, before we go further, we need to provide a formal description of what we mean by the ``fraction of light''.
Imagine an $n$-dimensional sphere that circumscribes the cube $[-1,1]^n$.
Consider all the lines that go through the origin. Lines that intersect any of the unit balls can not reach
the surface of the $[-2,2]^n$ cube, which encloses all the unit balls in the arrangement.
The intersection of those lines with the sphere that circumscribes the $[-1,1]^n$ cube
is the set that represents the shadows of the $2^n$
unit balls on the sphere. Our goal is to compute the ratio of the area
of the total shadow to the area of the entire sphere.
This is depicted in the right panel of Figure \ref{3dballs}, which shows a 2-D projection of a unit ball from a high-dimensional arrangement, centered at one of the $[-1,1]^n$ cube's vertices at a distance of $r=\sqrt{n}$ from the origin, and  rays of light emanating from the origin. The places in the $[-2,2]^n$ cube to which
light can not reach are shaded. The segment defined by the thick arc shows a 2-D projection of the (spherical) cap on the sphere $S^n(\sqrt{n})$
that circumscribes the cube $[-1,1]^n$.
Our question about ``fraction of light'' should be interpreted as finding the total measure of all $2^n$ such caps, relative to the area of the sphere $S^n(\sqrt{n})$.

%\begin{center}
%%\ifarxiv\else\textbf{[FIGURE 2 HERE]}\fi
%\end{center}
%\begin{figure}[htbp]
%\begin{center}
%%\ifarxiv\includegraphics[scale=0.6]{fig2.png}\fi
%\includegraphics[scale=0.8]{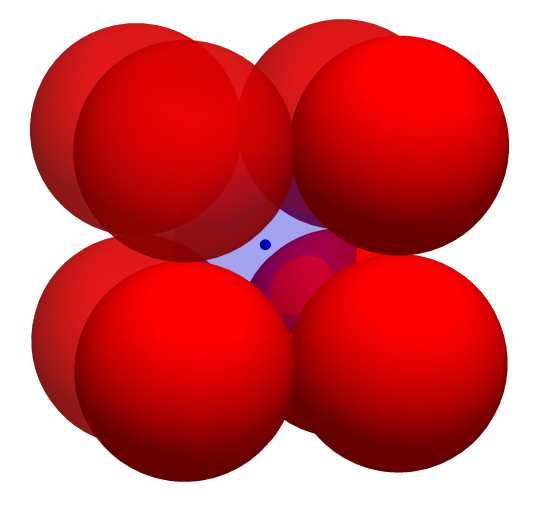}
%\caption{Arrangement of 8 unit balls in $\mathbb{R}^3$, centered at $\{\pm1, \pm1, \pm 1\}$
%and a source of light emanating from the origin.}
%\label{3dballs}
%\end{center}
%\end{figure}
%

\begin{figure}[htbp]
\centering\begin{subfigure}{0.45\textwidth}
\includegraphics[width=\textwidth, scale=0.7]{New3D.pdf}
%\caption{}
%\label{3dballs}
\end{subfigure}
\begin{subfigure}{0.43\textwidth}
\includegraphics[width=\textwidth, scale=1.3]{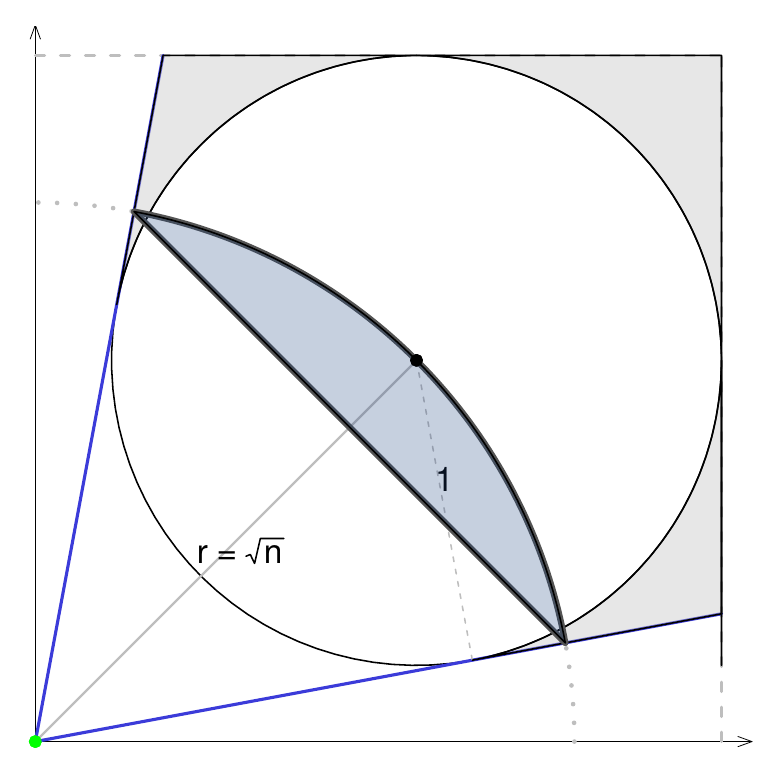}
%\caption{}
\label{shadow}
\end{subfigure}
\caption{Left: Arrangement of 8 unit balls in $\mathbb{R}^3$, centered at $\{\pm1, \pm1, \pm 1\}$
and a source of light emanating from the origin. Right: A projection of a high dimensional arrangement, showing the shaded area in the cube, and the shaded cap in the sphere passing through the center of the balls.}\label{3dballs}
\end{figure}

%Graphics3D[{Opacity[.7], Blue, Sphere[{0, 0, 0}, (Sqrt[3] - 1)/2],
%  Opacity[1.], Red, Sphere[{1, 1, 1}], Red, Sphere[{1, 1, -1}], Red,
%  Sphere[{1, -1, 1}], Red, Sphere[{-1, 1, 1}], Red,
%  Sphere[{1, -1, -1}], Red, Sphere[{-1, 1, -1}], Red,
%  Sphere[{-1, -1, 1}], Red, Sphere[{-1, -1, -1}]}, Boxed -> False]

\section{The Shadow of One Ball}

Consider the ball of radius $0\leq r<\sqrt{n}$ located at vertex $(1,\dots,1)$. What is the area of the shadow
of this one ball? The answer is known. The shadow formed by the ball is a spherical cap, which is
a nonempty intersection of a half-space and a sphere.

A cap can be specified in many different ways.  For instance, in \cite{ball1997elementary} caps are described in
terms of a distance to the origin or a cap radius (see Figure \ref{cap}). We will use an angle between the diagonal
vector $(1,\dots,1)$ and the ``edge'' of the cap as it is done in \cite{li2011}, where a nice
concise formula for the area of a cap is derived.

\begin{center}
%\ifarxiv\else\textbf{[FIGURE 3 HERE]}\fi
\end{center}
\begin{figure}[htbp]
  \begin{center}
  %\ifarxiv\includegraphics[scale=0.6]{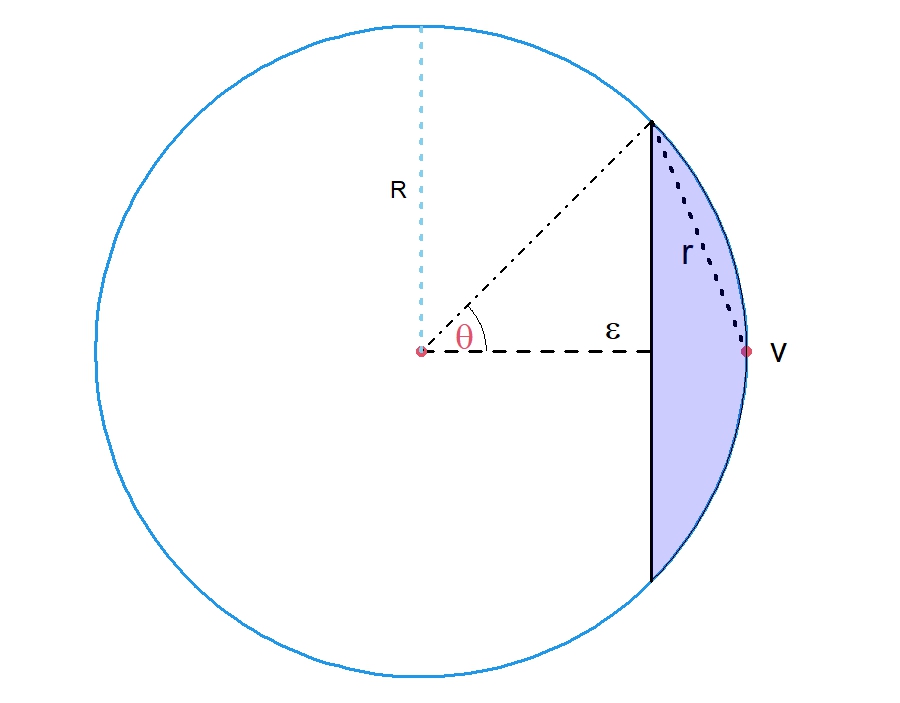}\fi
  \includegraphics[scale=0.5]{fig3.jpeg}
  \caption{Definitions of a spherical cap centered at $\mathbf{v}$.}
  \label{cap}
  \end{center}
  \end{figure}

More specifically, let us consider $n$-dimensional sphere of radius $R$, denoted by $S^{n}(R)$.
Then for any $0\leq \epsilon \leq R$, $0\leq r\leq\sqrt{R}$ and $0\leq\theta\leq\pi/2$, the smaller cap
can defined as
$$C_d(\epsilon, \mathbf{v})= \{\mathbf{x}\in S^{n}(R) : \mathbf{x}\cdot\mathbf{v} \ge \epsilon\},$$
or
$$C_r(r, \mathbf{v})= \{\mathbf{x}\in S^{n}(R) : ||\mathbf{x} - \mathbf{v}|| \le r\},$$
or
$$C_a(\theta, \mathbf{v})= \{\mathbf{x}\in S^{n}(R) :\arccos(\mathbf{x}\cdot\mathbf{v}) \le \theta\},$$
where $\mathbf{x}\cdot\mathbf{v}$ is the inner product of two vectors, and $||\cdot||$ is
the Euclidean distance. A simple calculation shows that when $r=2R\sin(\theta/2)$ and
$\epsilon=R\cos(\theta)$, all three definitions give us the same cap.

It is well-known that the area of $n$-dimensional sphere of radius $R$ is given by
\begin{equation}\label{sphere*area}
A_n(R)=\frac{2\pi^{n/2}}{\Gamma(n/2)}R^{n-1},
\end{equation}
where $\Gamma(\cdot)$ is the gamma-function. It was shown in \cite{li2011} that the small cap area
with angle $0\leq \theta\leq \pi/2$ is given by
\begin{equation}\label{cap*area}
A_n(R,\theta)=\frac{1}{2}A_n(R)F\left(\sin^2(\theta),\frac{n-1}{2},\frac{1}{2}\right),
\end{equation}
where $F$ is the cumulative distribution function (cdf) of the beta distribution:
$$
F(z,\alpha,\beta)= \frac{\Gamma(\alpha+\beta)}{\Gamma(\alpha)\Gamma(\beta)}\int_0^z u^{\alpha-1}(1-u)^{\beta-1}du,\quad\quad 0<z<1,\quad\alpha,\beta>0.
$$
It is easy to see that in our case the radius of the sphere is $R=\sqrt{n}$ and $\sin^2(\theta)=r/n$. Therefore,
the ratio of the area of the shadow to the area of the sphere is equal to
$$P_n(r)=\frac{1}{2}F\left(\frac{r}{n},\frac{n-1}{2}, \frac{1}{2}\right).$$

In particular, when $n=3$ and $r=1$
$$P_3(1)=\frac{1}{2}\left(1-\sqrt{\frac{2}{3}}\right).$$
Since the shadow caps of 8 unit balls do not overlap the total fraction of blocked light is equal to
$$4\left(1-\sqrt{\frac{2}{3}}\right)\approx .73.$$ Moreover, one can show
that the total fraction of blocked light goes to 0 as $n\to\infty$ for any fixed $r$. Indeed,
we have that
\begin{eqnarray*}
  F\left(\frac{r}{n},\frac{n-1}{2}, \frac{1}{2}\right) & =& \frac{\Gamma(\frac{n+1}{2})}{\Gamma(\frac{n-1}{2})\Gamma(\frac{1}{2})}\int_0^{r/n} u^{\frac{n-1}{2}-1}(1-u)^{\frac{1}{2}-1}du\\
  & \leq& \frac{\frac{n-1}{2}\Gamma(\frac{n-1}{2})}{\Gamma(\frac{n-1}{2})\Gamma(\frac{1}{2})}
    \int_0^{r/n}u^{\frac{n-3}{2}} du \\
    & =& \frac{n-1}{2\sqrt{\pi}} \frac{2}{n-1} \left(\frac{r}{n}\right)^{\frac{n-1}{2}}\\
    & =&\frac{1}{\sqrt{\pi}}\left(\frac{r}{n}\right)^{\frac{n-1}{2}}\,.
\end{eqnarray*}
That is, the ratio of one shadow cap to $A_n(\sqrt{n})$ goes to 0 much faster than
the number of unit balls, $2^n$, goes to $\infty$.

\section{A More Difficult Question}\label{main*question}
It is clear that if the radii of the balls at the vertices of the cube are equal to $\sqrt{n}$,
then the light will be blocked completely. So, here is a new question. Can we
find $\{r_n\}_{n\geq 1}$  such that for  $0<a<1$ we have that $P_n(r_n)\to a$ as $n\to \infty$?
We could not answer this question using geometry. It is not difficult to
figure out an asymptotic behavior of
$F\left(\frac{r_n}{n},\frac{n-1}{2}, \frac{1}{2}\right)$ for a given sequence $\{r_n\}_{n\geq 1}$.
But the issue is that  as soon as $r_n>1$, the balls (and, therefore, the shadow caps)
will overlap. This overlap is not easy to track, because some balls are very close to each other
with the distance of 2 units between their centers, and some pairs have the distance of $2\sqrt{n}$.

However, the question can be successfully addressed with the help of probability theory. The result
is quite surprising. To achieve a probability of $1/2$, the balls must be enormous. Specifically,
their radii must be $\sqrt{(1-2/\pi)n}+o(1)$.  Recall that the radius the cube circumscribing
hypersphere is exactly $\sqrt{n}$. However, a relatively small finite variation of radii
(independent of $n$) will change this probability. For example, if $r_n=\sqrt{(1-2/\pi)n}+1$,
then the fraction of blocked light will be almost 100\% for all sufficiently large $n$.
And it is almost 0\% for $r_n=\sqrt{(1-2/\pi)n}-1$.

As we mentioned above, the solution is probabilistic in nature. We will use two deep  and important but well-known results.
The first one is the celebrated Central Limit Theorem (CLT).
The second statement is about the uniform distribution on  $n$-dimensional unit sphere.
If we generate a vector of $n$ independent identically distributed (i.i.d.) standard normal
random variables  $\mathbf{Y}=(Y_1,\dots,Y_n)$, then $$\mathbf{U}=\frac{\mathbf{Y}}{\sqrt{Y_1^2+\cdots+Y_n^2}}=\frac{1}{\|\mathbf{Y}\|}(Y_1,\dots,Y_n)$$ is  uniformly distributed on surface of $n$-dimensional unit ball \citep{Milman2019TheHO}.
% (here $\|\cdot\|$ denotes the standard Euclidean norm).

\begin{prop} We call the line associated with vector $\mathbf{Y}$, $\mathbf{y}=\mathbf{Y}t$, $t\in \mathbb{R}$, a {\it random line}. Then, with probability 1, the vertices
$$(\sign(Y_1),\dots,\sign(Y_n))$$
and
$$(-\sign(Y_1),\dots,-\sign(Y_n))$$
are the closest  to and at the same distance from random line $\mathbf{y}=\mathbf{Y}t$, among all $2^n$
vertices of the cube $[-1, 1]^n$.
\end{prop}
\begin{proof}
Since the distance from the origin to any vertex is the same and equal to $\sqrt{n}$, a
vertex $\mathbf{v}$ with the largest absolute value of the cosine of the angle
between $\mathbf{v}$ and $\mathbf{Y}$
will have the smallest distance to the random line. The absolute value of  the cosine  of
this angle is given by
$$\frac{|\mathbf{v}\cdot \mathbf{Y}|}{\sqrt{\|\mathbf{v}\|\|\mathbf{Y}\|}}=\frac{|\mathbf{v}\cdot \mathbf{Y}|}{\sqrt{n\|\mathbf{Y}\|}}.$$
Therefore, the largest value is achieved when all the summands of inner
product $\mathbf{v}\cdot \mathbf{Y}$ have the same sign.
\end{proof}
Now, let $\mathbf{X}=(X_1,\dots,X_n)$ be a vector of  i.i.d. half-normal random variables.
That is, $X_i$ has the same distribution as $|Z|$, where $Z$ is a standard
normal random variable. Then the squared  distance between a random line
(passing through the origin) and the nearest vertex of the cube $[-1,1]^n$,
denoted by $d^2$, is  equal (in distribution) to the squared distance between point $\mathbf{c}=(1,\dots,1)$
and a line $$\mathbf{y}=\mathbf{X}t,\quad t\in\mathbb{R}.$$ This squared distance is given by
$$d^2=\left\|\mathbf{c}-\frac{\mathbf{c}\cdot\mathbf{X}}{\|\mathbf{X}\|^2}\mathbf{X}\right\|^2.$$
Denote
$$s= \mathbf{c}\cdot\mathbf{X}=X_1+\cdots+X_n,$$
and
$$t=\|\mathbf{X}\|^2=X_1^2+\cdots+X_n^2.$$ Note that $s\leq nt$. Then
\begin{eqnarray*}
d^2&=&\left\|\mathbf{c}-\frac{s}{t}\mathbf{X}\right\|^2\\
   &=&\left(1-\frac{s}{t}X_1\right)^2+\dots+\left(1-\frac{s}{t}X_n\right)^2\\
   &=&1-2\frac{s}{t}X_1+\frac{s^2}{t^2}X_1^2+\cdots+1-2\frac{s}{t}X_1+\frac{s^2}{t^2}X_1^2\\
   &=&n-2\frac{s}{t}s+\frac{s^2}{t^2}t\\
   &=&n-\frac{s^2}{t}.
\end{eqnarray*}
The Law of Large Numbers (LLN) immediately gives us that
$$\frac{d^2}{n}=1-\frac{(X_1+\cdots+X_n)^2}{n(X_1^2+\cdots+X_n^2)}=
1-\frac{(X_1+\cdots+X_n)^2}{n^2}\frac{n}{X_1^2+\cdots+X_n^2}\toP 1-\frac{2}{\pi},$$
because $\EE(X_i)=\sqrt{2/\pi}$ and $\EE(X_i^2)=1$. Now,
let us state a general result on the asymptotic behavior of the ratio $s^2/nt$. Note that
this ratio is also the square of the cosine of the angle between a random line $\mathbf{y}=\mathbf{X}t$, $t\in\mathbb{R}$
and the vector $(1,\dots,1)$.
\begin{prop}\label{ratio*normality}
 Let $\{X_i\}_{i\geq 1}$ be a sequence of i.i.d. positive random variables
with $\EE(X_i)=\mu$, $\EE(X_i^2)=1$, $\EE(X_i^3)=a$, and $\EE(X_i^4)=b$. Let
$$s=X_1+\cdots+X_n,$$
and
$$t=X_1^2+\cdots+X_n^2.$$
Then
$$\sqrt{n}\left[\frac{s^2}{nt}-\mu^2\right]\toD N(0,\sigma^2),$$
where $$\sigma^2=\mu^4b+4\mu^2-4\mu^3a-\mu^4.$$
\end{prop}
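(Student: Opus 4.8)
The plan is to recognize $s^2/(nt)$ as a smooth function of two sample averages and then invoke the multivariate Central Limit Theorem together with the delta method. First I would write $\bar{X}_n = s/n$ and $\bar{V}_n = t/n$, where $V_i = X_i^2$, so that
$$\frac{s^2}{nt} = \frac{(s/n)^2}{t/n} = \frac{\bar{X}_n^2}{\bar{V}_n} = g(\bar{X}_n, \bar{V}_n), \qquad g(x,v) = \frac{x^2}{v}.$$
Since $\EE(X_i) = \mu$ and $\EE(V_i) = \EE(X_i^2) = 1$, the natural expansion point is $(\mu, 1)$, and indeed $g(\mu, 1) = \mu^2$, which matches the centering in the statement.

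Next I would apply the bivariate CLT to the i.i.d. vectors $(X_i, V_i) = (X_i, X_i^2)$. The finiteness of $b = \EE(X_i^4)$ guarantees that these vectors have a finite covariance matrix $\Sigma$, whose entries are
$$\Var(X_i) = 1 - \mu^2, \qquad \Var(X_i^2) = b - 1, \qquad \Cov(X_i, X_i^2) = a - \mu,$$
obtained directly from the prescribed moments $\EE(X_i^2)=1$, $\EE(X_i^3) = a$, $\EE(X_i^4) = b$. The CLT then yields
$$\sqrt{n}\left[(\bar{X}_n, \bar{V}_n) - (\mu, 1)\right] \toD N(0, \Sigma).$$

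I would then apply the multivariate delta method. The function $g(x,v) = x^2/v$ is continuously differentiable in a neighborhood of $(\mu, 1)$ because the second coordinate is bounded away from zero there, and its gradient at that point is $\nabla g(\mu, 1) = (2\mu, -\mu^2)$. The delta method gives
$$\sqrt{n}\left[\frac{s^2}{nt} - \mu^2\right] \toD N(0, \sigma^2), \qquad \sigma^2 = \nabla g(\mu,1)^{\top}\, \Sigma \, \nabla g(\mu,1).$$

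The final step is the quadratic-form computation, which is the only place real care is needed. Expanding the quadratic form gives
$$\sigma^2 = (2\mu)^2(1-\mu^2) + 2(2\mu)(-\mu^2)(a - \mu) + (-\mu^2)^2(b - 1),$$
and collecting terms yields $\sigma^2 = \mu^4 b + 4\mu^2 - 4\mu^3 a - \mu^4$, as claimed. The main obstacle is therefore not conceptual but one of bookkeeping: correctly assembling $\Sigma$ from the given moments and carrying out the quadratic form so that the cross term $\Cov(X_i, X_i^2) = a - \mu$ enters with the right sign and coefficient. It is worth noting that in the application of interest the $X_i$ are half-normal, for which $\mu = \sqrt{2/\pi}$ and all required moments are finite, so the regularity hypotheses of both the CLT and the delta method are automatically satisfied.
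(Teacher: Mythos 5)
Your proposal is correct, and it arrives at exactly the same linearization as the paper, but it packages the argument differently. The paper does not invoke the multivariate delta method for this proposition; instead it carries out the delta method by hand: it writes the exact algebraic identity
$$\sqrt{n}\left[\frac{s^2}{nt}-\mu^2\right]=\frac{1}{n^{3/2}}(s-\mu n)^2\,\frac{n}{t}+\frac{1}{\sqrt{n}}\left[\sum_{i=1}^n\bigl(2\mu(X_i-\mu)-\mu^2(X_i^2-1)\bigr)\right]\frac{n}{t},$$
applies the univariate CLT to the linear combination $2\mu(X_i-\mu)-\mu^2(X_i^2-1)$ (whose coefficients $2\mu$ and $-\mu^2$ are precisely your gradient $\nabla g(\mu,1)$), shows the quadratic remainder is $o_{\PP}(1)$ and that $n/t\toP 1$, and finishes with Slutsky. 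The variance computation is identical in both treatments: the same covariance entries $1-\mu^2$, $b-1$, $a-\mu$ and the same quadratic form. What your route buys is brevity and a standard citation; what the paper's route buys is a self-contained, elementary derivation in which the error terms are exhibited and controlled explicitly rather than absorbed into a black-box theorem --- consistent with the expository aim of the article, which reserves the (univariate) delta method for the subsequent corollary. Your one-sentence justification that $g$ is smooth near $(\mu,1)$ and that $\mu>0$ (so the gradient is nonzero and the limit is nondegenerate) covers the only regularity points that need checking, so there is no gap.
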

\begin{proof} The Central Limit Theorem (CLT) tells us that both $(s-\mu n)/\sqrt{n}$
and $(t-n)/\sqrt{n}$ are asymptotically normal. So, first note that
\begin{eqnarray*}
\sqrt{n}\left[\frac{s^2}{nt}-\mu^2\right]
    &=&\frac{1}{n^{3/2}}\left[s^2-\mu^2tn\right]\frac{n}{t}\\
    &=&\frac{1}{n^{3/2}}\left[(s-\mu n)^2+2\mu ns-\mu^2n^2-\mu^2n(t-n)-\mu^2n^2\right]\frac{n}{t}\\
    &=&\frac{1}{n^{3/2}}\left[(s-\mu n)^2+2\mu n(s-\mu n)-\mu^2n(t-n)\right]\frac{n}{t}\\
    &=&\frac{1}{n^{3/2}}\left[(s-\mu n)^2\right]\frac{n}{t}+\frac{1}{n^{1/2}}\left[2\mu(s-\mu n)-\mu^2(t-n)\right]\frac{n}{t}\\
    &=&\frac{1}{n^{3/2}}\left[(s-\mu n)^2\right]\frac{n}{t}+\frac{1}{n^{1/2}}\left[\sum_{i=1}^n\left(2\mu(X_i-\mu)-\mu^2(X_i^2-1)\right)\right]\frac{n}{t}\,.
\end{eqnarray*}
Then, since $\Var(X_i)=1-\mu^2$, $\Var(X_i^2)=b-1$ and $\Cov(X_i,X_i^2)=a-\mu$ by the CLT we
get
$$\frac{1}{n^{1/2}}\left[\sum_{i=1}^n\left(2\mu(X_i-\mu)-\mu^2(X_i^2-1)\right)\right]\toD N(0,\sigma^2).$$
But we also have that
$$\frac{1}{n^{3/2}}\left[(s-\mu n)^2\right]\frac{n}{t}\toP 0$$
and
$$n/t\toP 1.$$
Slutsky's theorem is then applied to complete the proof.
\end{proof}
Now, taking into account that in our case $\EE(X_i)=\sqrt{2/\pi}$, $\EE(X_i^2)=1$,
$\EE(X_i^3)=2\sqrt{2/\pi}$, and $\EE(X_i^4)=3$, by Proposition~\ref{ratio*normality} we obtain that
\begin{equation}\label{our*ratio*normality}
\sqrt{n}\left[s^2/nt-2/\pi\right]\toD N\left(0, \frac{8}{\pi}-\frac{24}{\pi^2}\right).
\end{equation}
Thus, the distribution of squared distance $d^2$ is approximately normal with mean $(1-2/\pi)n$ and
standard deviation $\sqrt{(8/\pi-24/\pi^2)n}$. A side note, the assumption $\EE(X_i^2)=1$
in Proposition~\ref{ratio*normality} is not restrictive. As long as the first four moments
are finite, any distribution can be scaled to satisfy this assumption.

Let $\theta$ be the angle between a random line $\mathbf{y}=\mathbf{Y}t$, $t\in \mathbb{R}$
and the nearest vertex. Then by applying the delta method we obtain the following.
\begin{corr}\label{d*and*theta*normality}
If $n\to\infty$, then
$$\sqrt{n}\left(\cos(\theta)-\sqrt{\frac{2}{\pi}}\right)\toD N\left(0,\frac{\pi-3}{\pi}\right),$$
and
$$d-\sqrt{\left(1-\frac{2}{\pi}\right)n}\toD N\left(0,\frac{2(\pi-3)}{\pi(\pi-2)}\right).$$
\end{corr}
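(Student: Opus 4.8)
The plan is to obtain both limiting laws from the single normal-approximation result in \eqref{our*ratio*normality} by two applications of the delta method. The key observation is that the ratio $s^2/(nt)$ already carries all the randomness we need: as already observed, $s^2/(nt)$ equals $\cos^2(\theta)$, the squared cosine of the angle between the random line and $(1,\dots,1)$, while the earlier computation $d^2 = n - s^2/t$ gives $d^2/n = 1 - s^2/(nt)$. Thus both $\cos(\theta)$ and $d/\sqrt{n}$ are smooth functions of the one quantity whose fluctuations have already been pinned down, and each limit law should follow by composing \eqref{our*ratio*normality} with the appropriate square root.

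First, for $\cos(\theta)$, I would set $g(x)=\sqrt{x}$ and apply the delta method to \eqref{our*ratio*normality} at the point $x=2/\pi$. Since the coordinates $X_i$ are half-normal we have $s\ge 0$, so $\cos(\theta)=\sqrt{s^2/(nt)}$ is the correct (nonnegative) branch. With $g'(x)=1/(2\sqrt{x})$ we get $g'(2/\pi)^2=\pi/8$, and the limiting variance becomes $(\pi/8)(8/\pi-24/\pi^2)=1-3/\pi=(\pi-3)/\pi$, which is the first claim.

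Second, for $d$, I would write $d=\sqrt{n}\,\sqrt{d^2/n}$ and apply the delta method to $W_n := d^2/n = 1 - s^2/(nt)$. From \eqref{our*ratio*normality} we have $\sqrt{n}\bigl(W_n-(1-2/\pi)\bigr)\toD N(0,\,8/\pi-24/\pi^2)$. Taking $h(x)=\sqrt{x}$ with $h'(1-2/\pi)=1/\bigl(2\sqrt{1-2/\pi}\bigr)$, and noting the exact identity $d-\sqrt{(1-2/\pi)n}=\sqrt{n}\bigl(\sqrt{W_n}-\sqrt{1-2/\pi}\bigr)$, the delta method yields a centered normal limit with variance $\tfrac{1}{4(1-2/\pi)}(8/\pi-24/\pi^2)$, which simplifies to $\tfrac{2(\pi-3)}{\pi(\pi-2)}$.

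There is no genuine analytic obstacle here: the limit \eqref{our*ratio*normality} does all the heavy lifting, and the delta method applies directly because $g$ and $h$ are continuously differentiable with nonzero derivative at the relevant positive limit points $2/\pi$ and $1-2/\pi$. The only points requiring care are the algebraic bookkeeping of the variance constants and the branch/sign issue for the square root noted above, both of which are routine. If anything is the ``hard part,'' it is merely confirming the final simplifications of the variances, which I expect to match the stated values exactly.
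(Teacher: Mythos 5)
Your proposal is correct and follows essentially the same route as the paper: both limits are obtained from \eqref{our*ratio*normality} by the delta method with the square-root function, applied at $2/\pi$ for $\cos(\theta)$ and at $1-2/\pi$ for $d^2/n$, with matching variance computations. Your additional remarks (the nonnegative branch of the square root since $s\ge 0$, and the exact identity $d-\sqrt{(1-2/\pi)n}=\sqrt{n}\bigl(\sqrt{d^2/n}-\sqrt{1-2/\pi}\bigr)$) are details the paper leaves implicit, but the argument is the same.
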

\begin{proof}
Since $\cos^2(\theta)=s^2/nt$, applying the delta method to (\ref{our*ratio*normality})
for the function $g(x)=\sqrt{x}$ we get
$$\sqrt{n}\left[g(\cos^2(\theta))-g\left(2/\pi\right)\right]\toD N\left(0,\left(\frac{8}{\pi}-\frac{24}{\pi^2}\right)\left[g'\left(2/\pi\right)\right]^2\right),$$
and after some algebra we get the first asymptotic result.

Next, from (\ref{our*ratio*normality}) we also have that
$$\sqrt{n}\left[d^2/n-(1-2/\pi)\right]\toD N\left(0, \frac{8}{\pi}-\frac{24}{\pi^2}\right).$$
Therefore,
$$\sqrt{n}\left[g(d^2/n)-g(1-2/\pi)\right]\toD N\left(0, \left(\frac{8}{\pi}-\frac{24}{\pi^2}\right)\left[g'\left(1-2/\pi\right)\right]^2\right),$$
which gives us the second convergence result.
\end{proof}

Corollary \ref{d*and*theta*normality} gives us the answer to the main question of the article, which we formalize now:
\begin{corr}\label{mainresult}
Assume that at every vertex of the cube $[-1,1]^n$ we put a $n$-ball of radius $r_n$. Let $p(r_n)$ be
the probability that a random line will intersect at least one of the balls. Then we have the following.
\begin{enumerate}
  \item If $r_n\leq \sqrt{\alpha n}$, where $\alpha<1-2/\pi$, then  $p(r_n)\to 0$ as $n\to\infty$,
  \item If $r_n\geq \sqrt{\alpha n}$, where $\alpha>1-2/\pi$, then  $p(r_n)\to 1$ as $n\to\infty$,
  \item If $r_n= \sqrt{(1-2/\pi) n}+z\sqrt{2(\pi-3)/(\pi(\pi-2))}$, where  $z\in \mathbb{R}$, then  $p(r_n)\to \Phi(z)$ as $n\to\infty$,
  where $\Phi$ is the cdf of the standard normal distribution.
\end{enumerate}
\end{corr}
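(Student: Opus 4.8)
\section*{Proof proposal}

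The plan is to reduce the geometric event to a one-dimensional probability statement about the single random variable $d$, and then read off all three cases directly from the limit theorems already established. The essential observation is that the random line $\mathbf{y}=\mathbf{Y}t$ meets the ball of radius $r_n$ centered at a vertex $\mathbf{v}$ if and only if the distance from the line to $\mathbf{v}$ is at most $r_n$. Since $d$ is, by construction, the distance from the line to the \emph{nearest} vertex (Proposition~1 identifies which vertex achieves this minimum), the line intersects at least one of the $2^n$ balls precisely when $d\le r_n$. Hence
$$\alpha(r_n)=\PP(d\le r_n),$$
and it only remains to analyze this distribution function using the asymptotics for $d$. I would also record that $r\mapsto \PP(d\le r)$ is nondecreasing, which lets me replace the inequalities $r_n\le\sqrt{\alpha n}$ and $r_n\ge\sqrt{\alpha n}$ by the boundary value $r_n=\sqrt{\alpha n}$ in the first two cases.

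For cases 1 and 2 I would invoke the LLN result $d^2/n\toP 1-2/\pi$ established above, equivalently $d/\sqrt{n}\toP\sqrt{1-2/\pi}$. In case 1, monotonicity gives $\alpha(r_n)\le\PP\big(d/\sqrt{n}\le\sqrt{\alpha}\big)$ with $\sqrt{\alpha}<\sqrt{1-2/\pi}$, so the bound tends to $0$. In case 2, $\alpha(r_n)\ge\PP\big(d/\sqrt{n}\le\sqrt{\alpha}\big)$ with $\sqrt{\alpha}>\sqrt{1-2/\pi}$, so the bound tends to $1$. Both conclusions are immediate from convergence in probability to a constant strictly separated from the threshold $\sqrt{\alpha}$.

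For case 3 I would use the sharper statement of Corollary~\ref{d*and*theta*normality}, namely $d-\sqrt{(1-2/\pi)n}\toD N\big(0,\sigma_d^2\big)$ with $\sigma_d^2=2(\pi-3)/(\pi(\pi-2))$. Substituting $r_n=\sqrt{(1-2/\pi)n}+z\sigma_d$ yields
$$\alpha(r_n)=\PP\!\left(d-\sqrt{(1-2/\pi)n}\le z\sigma_d\right)\longrightarrow \Phi(z),$$
where the limit follows because the limiting law is continuous (so the distribution functions converge pointwise) and standardizing the $N(0,\sigma_d^2)$ variable turns the threshold $z\sigma_d$ into $z$.

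The only step demanding genuine care is the reduction $\alpha(r_n)=\PP(d\le r_n)$: one must verify that ``hitting some ball'' is exactly the event that the \emph{minimal} vertex-to-line distance is at most $r_n$, and that this minimal distance is the very quantity $d$ whose law was computed through the half-normal representation. Once that identification is secured, cases 1--3 are direct applications of the LLN and the CLT, so the main obstacle is conceptual --- correctly setting up the equivalence --- rather than computational.
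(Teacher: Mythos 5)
Your proposal is correct and follows exactly the route the paper intends: it reduces $\alpha(r_n)$ to $\PP(d\le r_n)$, then derives cases 1--2 from the LLN statement $d^2/n\toP 1-2/\pi$ and case 3 from the CLT for $d$ in Corollary~\ref{d*and*theta*normality}. The paper itself only asserts this in one sentence, so your write-up is a faithful (and more complete) elaboration of the same argument.
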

The first two statements follow from the LLN, and the last one is a consequence of the CLT.

\section{Concentration of Measure}\label{sec:com}

Let us discuss our findings. For large $n$, to block a fraction of the light, the balls at the
vertices of the cube $[-1,1]^n$ must be large, with a radius of $O(\sqrt{n})$. Moreover, a small change in their
radii, just by $O(1)$, will have a significant impact. We did state that you have to be cautious with
applying 3-D intuition to high-dimensional space. But on the other hand, it is human nature to
try to find some big-picture explanations. The fact that the balls are $O(\sqrt{n})$ in size is
not that surprising. After all, the cube $[-1,1]^n$ is a large object, and the distance between
opposite vertices is $2\sqrt{n}$. The second result is more difficult to accept. We believe,
however, it can be ``explained'' with the help of the phenomenon known as ``concentration
of measure''.

There is a deep connection between the concentration of measure and the Law of Large Numbers (LLN)
for independent random variables. This connection is the main topic of \citeauthor{talagrand1996}'s
(\citeyear{talagrand1996}) seminal paper. Let us first illustrate the main idea using a simple
example.

Consider again the cube $[-1,1]^n$, and a discrete \textit{uniform} distribution on the set
of $2^n$ vertices, so that the probability of randomly drawing each of the $2^n$ vertices
of the cube is $2^{-n}$. Note that all the vertices also belong to $n$ dimensional
sphere of radius $\sqrt{n}$. %This discrete setting is also utilized by \cite{talagrand1996}.

Now, let us fix a vertex $\mathbf{p}$, which we will
call a \textit{pole}, and its opposite pole is obtained by multiplying $\mathbf{p}$ by $-1$.
We define the $k$-th latitude $$L_k=\{\mathbf{x}\in \{-1, 1\}^n ~:~ ||\mathbf{x} - \mathbf{p}||^2 = 4k\}\,,$$
for $k=0,1,\ldots,n$. In other words, the latitude $L_k$ is the set of all vertices that
disagree with $\mathbf{p}$ in exactly $k$ coordinates.

When $n$ is even,  $L_{n/2}$ can be called ``\textit{equator}'', because it
is the set of points that are equidistant to $\mathbf{p}$ and $-\mathbf{p}$.
It is easy to see that the points in the equator are {\it perpendicular} to the pole $\mathbf{p}$.
Indeed, $\mathbf{x}\in L_{n/2}$ if exactly $n/2$ of its coordinates are the same as in $\mathbf{p}$.
Therefore,  $\mathbf{p}\cdot\mathbf{x}=0$, that is, the cosine of the angle between $\mathbf{p}$
and $\mathbf{x}$ is 0.

A randomly chosen vertex falls in the $k$-th latitude relative to the pole
with probability $p(k)$, which follows the binomial distribution with $n$ trials
and probability of success 1/2. This is so because, being in the $k$-th latitude is equivalent
to choosing $k$ coordinates in $\mathbf{p}$ and multiplying them by $-1$.
If we denote the latitude relative to $\mathbf{p}$ in which a random vertex falls by $K$,
then the expected value of the latitude is $\EE(K)=n/2$ and the variance is $\Var(K)=n/4$.
Therefore, Bernoulli's LLN for the binomial distribution tells us that for
any (small) $\epsilon>0$ and sufficiently large $n$, with probability close to 1
the latitude of the random vertices  will be within $\epsilon n$
distance from $n/2$. Of course, similarly to our Corollary~\ref{mainresult}, the CLT
will give us an even more precise statement.

Thus, for large $n$, the thin central slab of the sphere $S^{n}(\sqrt{n})$  contains
almost all random vertices. Alternatively, we can say that almost all random vertices lie outside
two large caps centered at the two opposite poles $\mathbf{p}$ and $-\mathbf{p}$.
Considering that our choice of poles is arbitrary, this statement is also correct for any other
central slab. 

Now, let us explain how to reach the same conclusion using the concentration of measure of the product space.
Note that the uniform distribution over $2^n$ vertices is indeed an $n$-product space of the discrete
probability space consisting of two points $\{-1,1\}$ with a probability of $1/2$ for each. With a bit of effort,
one can show that Proposition 2.1 of \cite{talagrand1996} is directly applicable
to our situation. Essentially, it states that {\it if we select any subset $A$  of the vertices such that $\PP(A) \geq 1/2$,
then most of the remaining vertices will be close to  $A$}. This implies that if we take a hemisphere which contains the pole $\mathbf{p}$
(including the equator if $ n$  is even), then with high probability, the rest of the vertices will be close to this set,
meaning they will belong to a thin slab right under the hemisphere.
Since the same is true for the opposite hemisphere with pole $-\mathbf{p}$, we obtain the same result as before:
with high probability, a random vertex belongs to a thin central slab.
However, this ``proof''  is based on the concentration of measure, not the Law of Large Numbers.
Also note that, as before, the same can be derived for hemispheres with different opposing poles.

So, how can the concentration of measure phenomenon  be linked to our fraction of light story? Think
about the uniform probability measure over the $n$-dimensional sphere that  circumscribes the cube $[-1,1]^n$.
The union of all shadow caps from the balls at the vertices is a subset of the sphere.
For large $n$, when the ball radii are about the size of $\sqrt{(1-2/\pi) n}$, the union of all
shadows has a measure close to $1/2$. Therefore, if you keep in mind the concentration of
measure phenomenon,  it seems plausible that increasing the cap
sizes by a constant ($O(1)$) will create almost a complete cover of the entire $n$-dimensional sphere. One, of course, can note
that the $n$-dimensional sphere with uniform distribution is not a product space. But if we recall how one can
generate a random  point on the $n$-dimensional sphere with the uniform distribution, then we can see that for large $n$
any two coordinates of the point are almost independent. The only common factor they have is
the normalizing square root of the sum of squares that for large $n$ by the LLN is almost a constant.
Since independence and product space are two closely related mathematical concepts, it is
not surprising that for large $n$ the $n$-dimensional sphere with the uniform distribution has the
concentration of measure property.

\section{Discussion}\label{sec:disc}
Is high dimensional data a blessing or a curse? Clearly, it presents significant challenges, both theoretical
and computational. It is also impossible to visualize or to obtain geometrical interpretations for such data.
It is tempting to assume that our intuition in the two dimensional Euclidean space extends to higher dimensions,
but this is not so. However, through probability models one can derive asymptotic results and obtain
a geometric intuition, which is often surprising, if not  counterintuitive. In particular, high-dimensional
data tends to concentrate at very specific regions which depend on our choice of coordinate
system and reference points. For example, relative to a point of origin, multivariate normal distribution
in $\mathbb{R}^n$ concentrates very close to a sphere of radius $\sqrt{n}$, and if one chooses two
points as opposing poles on the sphere, then the data concentrates near the equator relative to the poles.

We highlight the deep connection between
concentration of measure and the law of large numbers for independent random variables.
Consider what the discrete example in the previous section means in the context of receiving a binary-coded message, such as
ones transmitted by GPS satellites. If there is no prior
information and we do not know what message is expected, each sequence of length $n$
is equally likely. But, if we expect a specific message
$\mathbf{p}$ (e.g., the identifier of a specific satellite), then the vast majority of
random messages will be nearly perpendicular to $\mathbf{p}$, making it
very easy to detect them as noise. In contrast, a message with few errors will be in a latitude
far enough from the equator, and close to the cap centered at $\mathbf{p}$.
This example demonstrates the fact that  high dimension data is actually a blessing.
Indeed, concentration of measure leads to what is called in the literature  ``\textit{the blessing of dimensionality}'',
a term coined by \cite{donoho2000high}.

In the discrete example discussed in Section \ref{sec:com} the random vectors are drawn
uniformly from the set of $2^n$ cube vertices.
In our paper we assume that an $n$-dimensional random vector comes from a standard
multivariate normal distribution. The support is the entire Euclidean space, but most of
the drawn data are concentrated very close to the  sphere of radius $\sqrt{n}$. This is so because,
although the expected location of a random univariate normal variable $x$ is 0, its
expected length is $1$ (the length is $|x_i| = \sqrt{x_i^2}$, which has a half-normal (or a chi) distribution.)
The sum of $n$ independent squared standard normal random variables has a chi-squared distribution
with $n$ degrees of freedom, with mean $n$. Thus, using the CLT, one can show that random
multivariate normal
data is concentrated close to the sphere of radius $\sqrt{n}$. The origin of this famous result is discussed by
\cite{Milman2019TheHO} which highlights Lévy's contributions.
\citeauthor{Levy}'s (\citeyear{Levy}) motivation was a paper by \cite{Borel1914} who made a remark
about geometric interpretation of the
law of large numbers, but the result was even known a couple of years prior, to Poincaré  (\citeyear{Poincare1912}).

% \bigskip
% \begin{center}
% {\large\bf SUPPLEMENTARY MATERIAL}
% \end{center}

% \begin{description}

% \item[Title:] Brief description. (file type)

% \item[R-package for  MYNEW routine:] R-package �MYNEW� containing code to perform the diagnostic methods described in the article. The package also contains all datasets used as examples in the article. (GNU zipped tar file)

% \item[HIV data set:] Data set used in the illustration of MYNEW method in Section~ 3.2. (.txt file)

% \end{description}

% \section{BibTeX}

% We hope you've chosen to use BibTeX!\ If you have, please feel free to use the package natbib with any bibliography style you're comfortable with. The .bst file Chicago was used here, and agsm.bst has been included here for your convenience.

\bibliographystyle{Chicago}

\bibliography{SpaceOddityArxiv1}
\end{document}